\definecolor{refkey}{rgb}{0,0,1}
\definecolor{labelkey}{rgb}{1,0,0}
\definecolor{darkblue}{rgb}{0.0, 0.0, 0.55}
\definecolor{darkcerulean}{rgb}{0.03, 0.27, 0.49}
\definecolor{darkpowderblue}{rgb}{0.0, 0.2, 0.6}
\definecolor{britishracinggreen}{rgb}{0.0, 0.26, 0.15}
\newenvironment{mgg}{\color{magenta}}{}
\newenvironment{brg}{\color{britishracinggreen}}{}
\newenvironment{red}{\color{red}}{}
\newcommand{\bre}{\begin{red}}
	\newcommand{\ere}{\end{red}}
\newcommand{\bas}{\begin{brg}}
	\newcommand{\eas}{\end{brg}}
\newcommand{\bblu}{}
\newcommand{\eblu}{}
\newcommand{\bmag}{\begin{mgg}}
	\newcommand{\emag}{\end{mgg}}
\NewDocumentCommand{\colorrule}{O{.4pt}m}{{\color{#2}\hrule height#1}\vspace{4mm}}
\newtheorem{thm}{Theorem}[section]
\newtheorem{prop}[thm]{Proposition}
\newtheorem{lem}[thm]{Lemma}
\theoremstyle{definition}
\newtheorem{defn}[thm]{Definition}
\numberwithin{equation}{section}
\def\cH{{\mathcal H}} 
\def\cA{{\mathcal A}}
\def\beq{\begin{equation}} 
	\def\eeq{\end{equation}}
\def\wres{\mathcal{W\!}}
\def\wresd{\mathcal{w}}
\def\IR{\mathbb{R}}
\def\IZ{\mathbb{Z}}
\title[Spectral Metric and Einstein Functionals  for the Hodge-Dirac operator]{Spectral Metric and Einstein Functionals \\[2mm] for the Hodge-Dirac operator} 
\author[L.\ D\k{a}browski]{Ludwik D\k{a}browski${}^{(1)}$}
\address{${}^{(1)}$ SISSA (Scuola Internazionale Superiore di Studi Avanzati), \newline\indent Via Bonomea 265, 34136 Trieste, Italy} 
\email{dabrow@sissa.it} 
\author[A.\ Sitarz]{Andrzej Sitarz${}^{(2)}$}
\author[P.\ Zalecki]{Pawe\l{} Zalecki${}^{(2)}$}
\thanks{This work is supported by the Polish National Science Centre grant 2020/37/B/ST1/01540}
\address{${}^{(2)}$ Institute of Theoretical Physics, Jagiellonian University, \newline\indent
	prof.\ Stanis\l awa \L ojasiewicza 11, 30-348 Krak\'ow, Poland.}
\email{andrzej.sitarz@uj.edu.pl}  
\email{pawel.zalecki@doctoral.uj.edu.pl}
\date{}
\begin{document}
\maketitle
\begin{abstract} \bblu
We examine the metric and Einstein bilinear functionals of differential forms introduced in \cite{DSZ23}, for the Hodge-Dirac operator $d+\delta$ on an oriented, closed, even-dimensional Riemannian manifold. We show that they are equal (up to a numerical
factor) to these functionals  for the canonical Dirac operator on a spin manifold. Furthermore, we demonstrate that the spectral triple for the Hodge-Dirac operator is spectrally closed, which implies that it is torsion-free.\eblu  \\[4mm]
\end{abstract}
	
\noindent Keywords: {\it Noncommutative geometry, Einstein tensor, spectral geometry, Wodzicki residue. }
	
\section{Introduction}
Spectral geometry investigates relationships between geometric structures of manifolds and the spectra of certain differential operators. Its direct and inverse problems are inextricably linked to other areas of mathematics such as number theory, representation theory, and areas of mathematical physics such as quantum mechanics and general relativity. In this regard, starting with the Laplace-Beltrami operator on a closed Riemannian manifold, general Laplace-type operators have been extensively studied, and their spectra provide insights into the geometry and topology of the underlying space. The distribution of eigenvalues, for example, reveals information about the curvature or shape and global geometric properties such as diameter or volume, connectivity, or the presence of holes.
	In this vein, the Dirac-type operators have also been studied, beginning with the canonical Dirac operator on the spin manifold.
	When subsumed into Connes' concept of spectral triples \cite{Co80,Co94}, they "can hear the shape of a drum" \cite{Ka66} in the sense that their equivalence (a suitably strengthened isospectrality) implies the isometricity of manifolds in virtue of the reconstruction theorem \cite{Co13}.
	Furthermore, they allow for broad and captivating generalisations in noncomutative geometry.
	
	Various (interrelated) spectral schemes that generate geometric objects on manifolds such as volume, scalar curvature, and other scalar combinations of curvature tensors and their derivatives are
	the small-time asymptotic expansion of the (localised) trace of the heat kernel \cite{Gi84,Gi04}, certain values or residues of the (localised) zeta function of the Laplacian, the spectral action,
	and the Wodzicki residue $\wres$ (also known as noncommutative residue).
	In this paper, we focus on the latter one, which is the unique (up to multiplication by a constant)
	tracial state on the algebra of pseudo-differential operators
	($\Psi$DO) on a complex vector bundle $E$ over a compact manifold $M$ of dimension $n\geq 2$ \cite{Gu85,Wo87}.
	For the oriented manifold $M$ it is given by an integral formula,
	\begin{equation}
		\wres\,(P) :=
		\int_M \wresd\,(P)
	\end{equation}
	where the density $\wresd\,(P)$ is given in local coordinates by 
	\begin{equation}
 \int_{|\xi|=1} tr\, \sigma_{-n}(P)(x,\xi)~ {\mathcal V}_\xi~ d^n x.
	\end{equation}
Here $tr$ is the trace over endomorphisms of the bundle $E$ at any given point of $M$, 
$\sigma_{-n}(P)$ is the symbol of order $-n$ of a pseudodifferential operator $P$ and ${\mathcal V}_\xi$ denotes the volume form on the unit co-sphere.
	
	When applied to the (scalar) Laplacian $\Delta$ on a Riemannian manifold $M$
	of dimension $n=2m$ equipped with a metric tensor $g$ it yields, 
	in a {\it localized} form,  a functional of $f \in C^\infty(M)$,
	\beq
	\label{WresfL}
	{\mathcal v}(f) :=  \wres\,(f \Delta^{-m})= v_{n-1} \int_M f~vol_g,
	\eeq
	where 
	$$v_{n-1}:=vol(S^{n-1}) = \frac{2\pi^{m}}{\Gamma(m)},$$ 
	is the volume of the unit sphere $S^{n-1}$ in $\IR^n$.
	
	\bblu A startling result regarding a higher power of the Laplacian presented by Connes 
	\cite{Co96} in the early 1990s (see \cite{KaWa95, Ka95} for explicit computations) states 
	that \eblu
	\beq\label{WresfLL}
	{\mathcal R}(f) := \wres\,(f \Delta^{-m+1})= \frac{n-2}{12} v_{n-1} \int_M f R(g) vol_g, 
	\eeq which for $n>2$ and $f=1$ is, up to a constant, a Riemannian analogue of the Einstein-Hilbert action functional of general relativity in vacuum.  Here $R=R(g)$ is the scalar curvature, that is the $g$-trace  
	$R\!=\!g^{jk}R_{jk}$ of the Ricci tensor  with components $R_{jk}$ in local coordinates, where $g^{jk}$ are the raised components of the metric $g$.

	In the noncommutative realm, the spectral-theoretic approach to scalar curvature has been extended to quantum tori in the seminal work of Connes and Tretkoff  
	\cite{CoTr11} and extensively studied by many authors (see references in \cite{DSZ23}).
	
	In the recent paper \cite{DSZ23} we accomplished the task of extracting two other important tensorial geometrical objects through spectral methods. These were the metric tensor ${ g}$ itself, its dual, and the Einstein tensor
	$$ { G}:= \hbox{Ric} - \frac{1}{2} R(g)\, {g},$$
	which directly enters the Einstein field equations with matter and its dual. In fact, for this purpose, we employed the Wodzicki residue of a suitable power of the Laplace type operator or of the Dirac type operator, multiplied by a pair of other differential operators. Notably, we have recovered the tensors ${g}$ and ${G}$ as the density of certain bilinear functionals of vector fields on a manifold $M$, while their dual tensors are the density of bilinear functionals of differential one-forms on $M$. The latter functionals (up to a numerical factor) we have obtained also for the canonical Dirac operators (in case $M$ is a spin manifold).
	Then, using Connes' and Moscovici's \cite{CoMo95} generalisation of pseudodifferential calculus for noncommutative spectral triples, we introduced their conspicuous quantum analogue and probed it on 2 and 4-dimensional noncommutative tori.
	
	The aim of this paper is, employing methods of the Wodzicki residue, to analyse the metric and Einstein functionals for another natural Dirac-type operator, namely the Dirac-Hodge operator $d+\delta$ acting on (complex) differential forms $\Omega(M)$ of arbitrary order on a oriented even-dimensional Riemannian manifold $M$. It is worth mentioning that the associated Hodge-Dirac spectral triple is characterised \cite{DDS18} by the fact that 
\bblu its dense Hilbert subspace of continuous forms provides a Morita equivalence $Cl(M)-Cl(M)$ bimodule,
	where $Cl(M)$ is the $C^*$-algebra of continuous sections of the bundle of Clifford algebras on $M$.
	As is well known, the canonical spectral triple on a spin manifold is instead characterised by the fact that its dense Hilbert subspace of continuous Dirac spinors 
\eblu	
	provides a Morita equivalence $Cl(M)-C(M)$ bimodule, where $C(M)$ is the algebra of continuous complex functions on $M$.
	As our first main result, we demonstrate that these two different pivotal cases yield in fact equal spectral metric and Einstein functionals (up to a numerical factor). Moreover, as our second main result, we prove that the associated spectral triple is spectrally closed, that is, for any operator $T$ of zero-order,
	$$ \wres (T D|D|^{-n}) \equiv 0. $$
	A forthcoming result \cite{DSZ23b} demonstrates that, as a consequence, the Hodge-Dirac operator has no torsion.
	\section{Preliminaries} 
	Let $n=2k$ be the dimension of an oriented, closed, smooth Riemannian manifold $M$.
	We will use capital letters to denote increasing sequences of numbers between $1$ and $n$, of fixed length $0 \leq \ell \leq n$.
	A differential $\ell$-form $\omega = \sum_J \omega_J dx^J$ is determined by its
	coefficients $\omega_J$, with respect to coordinates indicated by the multi-index $J$, where with a slight abuse of notation $0$-forms (i.e. functions) will correspond to $J=\emptyset$.
	
	We introduce the operators $\lambda^j_+$ and $\lambda^j_-$ which respectively raise/lower the degree of forms, with components given by 
	$$ (\lambda^p_+)^I_J = \epsilon^{I}_{pJ}, \qquad (\lambda^p_-)^I_J = \epsilon^{pI}_{J}, $$
	where $\epsilon^{I}_{pJ}=(-)^{|\pi|}$ if the juxtaposed index $pJ$ is a permutation $\pi$ of $I$ and $\epsilon^{I}_{pJ}=0$ otherwise, and similarly for $\epsilon^{pI}_{J}$. They satisfy
	\begin{equation}
		\begin{aligned}
			&\lambda^p_+ \lambda^r_+ + \lambda^r_+ \lambda^p_+ = 0,  \\	
			&\lambda^p_- \lambda^r_- + \lambda^r_- \lambda^p_- = 0,  \\
			&\lambda^p_+ \lambda^r_- +  \lambda^r_- \lambda^p_+= \delta_{pr} \,{\rm id},
		\end{aligned}
	\end{equation}
	which follow from the relations  (c.f. \cite{MMMT})
	\begin{equation}
		\begin{aligned}
			&	\sum_K   \epsilon^{I}_{pK}  \epsilon^{K}_{rJ} =  \epsilon^{I}_{prJ}, \qquad
			&	\sum_K   \epsilon^{pI}_{K}  \epsilon^{rK}_{J} =  \epsilon^{I}_{rpJ}, \\
			&	\sum_K   \epsilon^{I}_{pK}  \epsilon^{rK}_{J} = \delta_{pr} \epsilon^I_J - \epsilon_{pJ}^{rI}, \qquad 
			&	\sum_K   \epsilon^{rI}_{K}  \epsilon^{K}_{pJ} =  \epsilon_{pJ}^{rI}, \\
		\end{aligned}
	\end{equation}
	where the juxtaposed indices can be ordered using a signed permutation.
	We also introduce 
	$$\gamma^p  = -i(\lambda_+^p - \lambda_-^p),$$
	which satisfy the following Clifford algebra relation
	$$\{ \gamma^p, \gamma^r\} =2\delta_{pr}.$$ 
	
	In the rest of the paper, we employ normal coordinates $x$ centred around some fixed point on the manifold. Recall that then the components of the metric tensor $g$, its covariant (raised) components, and the square root of the determinant 
	of the matrix of the components of $g$ and the components of the \bblu Christoffel symbols of the \eblu Levi-Civit\'a connection have  the following Taylor expansion around $x=0$:
	\begin{equation}
		\begin{aligned}	
			&		g_{ab} = \delta_{ab} - \frac{1}{3} R_{acbd} x^c x^d + o({{\bf x}^2}), \\
			&		g^{ab} = \delta_{ab} + \frac{1}{3} R_{acbd} x^c x^d + o({{\bf x}^2}), \\
			&		\sqrt{\hbox{det}(g)}  = 1  - \frac{1}{6} \mathrm{Ric}_{ab} x^a x^b + o({{\bf x}^2}), 	 \\
			&		\Gamma^a_{bc}= -\frac{1}{3}(R_{abcd}+R_{acbd } ) x^{d} +o({{\bf x}^2}).	
		\end{aligned}				
		\label{allNorm} 
	\end{equation}
	Here $R_{acbd}$ and $\mathrm{Ric}_{ab}$ are the components of the Riemann and Ricci tensors, respectively, at the point $x=0$ and we use the notation $o({{\bf x}^k})$ to denote that we expand a function up to the polynomial of order $k$ in the normal coordinates. 
	\bblu The expansion in normal coordinates is more convenient notation to obtain the value
	of relevant quantities (symbols of operators) and their derivatives at a given point on the manifold, which we need to compute the products of symbols according to the rules of
	multiplication of symbols (\ref{composition}).
	The position of indices for $\lambda^p_\pm$, coordinates $x$ and $\xi$, as well as Riemann
	and Ricci tensors at the chosen point is chosen for simplicity only (as we are using normal 
	coordinates and the metric at this point is $\delta_{ab}$). We use Einstein summation convention
	for repeated indices (independently of their position), by $\{\, ,\}$ we denote 
	anticommutators. \eblu  
	\subsection{Hodge-Dirac operator}
	We focus on the Hodge-Dirac operator $D=d+d^*$, where $d$ is the exterior derivative and $d^*$ is its  (formal) adjoint. 
	Using our notation, we compute 
\bblu	(locally) \eblu
	the symbol of $D$, 
	\begin{equation}
		\sigma(D) =  (i  \lambda^p_+ - i g^{pr}  \lambda^r_-) \xi_p
		+ \lambda_-^p \lambda_+^r \lambda_-^s  \Gamma^s_{rt} g^{pt},
		\label{HoDi}
	\end{equation}
	which in normal coordinates takes form 
	\begin{equation}
		\sigma(D) =  - \gamma^p \xi_p  - \frac{1}{3} i \lambda^p_- R_{sapb} x^a x^b \xi_s 
		- \frac{1}{3} \lambda_-^p \lambda_+^r \lambda_-^s 
		(R_{srpa}+R_{spra}) x^a + o({{\bf x}^2}).   
	\end{equation}	
	We compute then the symbols of the Hodge-Dirac Laplacian $D^2$
	in normal coordinates up to orders relevant for our purposes. 
	\begin{lem}
		The three homogeneous symbols of $D^2$ read
		$$ 
		\begin{aligned}
			{\mathfrak a}_2 =& \bigl( \delta_{ab} + \frac{1}{3} R_{acbd} x^c x^d \bigr) 
			\xi_a \xi_b + o({{\bf x}^2}), \\
			{\mathfrak a}_1 =&+ \frac{2}{3} i Ric_{ab} \xi_a x^b 
			- \frac{2}{3} i \lambda_+^p \lambda_-^r   (R_{rpab} + R_{rapb} )  x^b   \xi_a + o({{\bf x}^1}), \\
			{\mathfrak a}_0 = &  +\frac{2}{3}  \lambda_+^p   \lambda_-^r   Ric_{pr}
			+  \frac{1}{3}  \lambda_+^p   \lambda_+^r  \lambda_-^s \lambda_-^t   (R_{tsrp}+R_{trsp })  + o({{\bf x}^0}).
		\end{aligned}
		$$
	\end{lem}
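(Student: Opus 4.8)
The plan is to read off the three homogeneous symbols directly from $\sigma(D)$ by means of the composition rule for symbols of pseudodifferential operators,
\[
\sigma(D^2)\ \sim\ \sum_{\alpha}\frac{(-i)^{|\alpha|}}{\alpha!}\,\partial_\xi^\alpha\sigma(D)\;\partial_x^\alpha\sigma(D),
\]
noting that $\sigma(D)$ is affine in $\xi$, so that every contribution with $|\alpha|\ge 2$ vanishes identically and only
\[
\sigma(D^2)=\sigma(D)^2-i\sum_j\partial_{\xi_j}\sigma(D)\,\partial_{x_j}\sigma(D)
\]
remains to be evaluated. In normal coordinates it then suffices to retain, in the parts of this expression homogeneous of degree $2$, $1$ and $0$ in $\xi$, the terms up to order $o({\bf x}^2)$, $o({\bf x})$ and $o({\bf 1})$ respectively. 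I would organise the computation by writing $\sigma(D)=\sigma_1+\sigma_0$ according to the degree in $\xi$, and splitting further $\sigma_1=-\gamma^p\xi_p+\sigma_1'$, where $\sigma_1'=-\tfrac{1}{3}i\,\lambda^p_-R_{sapb}x^ax^b\xi_s$ is the $O({\bf x}^2)$ correction, while $\sigma_0=-\tfrac{1}{3}\lambda^p_-\lambda^r_+\lambda^s_-(R_{srpa}+R_{spra})x^a$ is itself $O({\bf x})$.

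The degree-two symbol ${\mathfrak a}_2$ comes only from $\sigma_1\sigma_1$: the leading piece $\gamma^p\gamma^q\xi_p\xi_q=\delta_{pq}\xi_p\xi_q$ by the Clifford relation $\{\gamma^p,\gamma^q\}=2\delta_{pq}$, and the cross terms $(-\gamma^p\xi_p)\sigma_1'+\sigma_1'(-\gamma^q\xi_q)$, reduced by $\gamma^p\lambda^q_-+\lambda^q_-\gamma^p=-i\delta_{pq}$ (immediate from the $\lambda_\pm$ relations and $\gamma^p=-i(\lambda^p_+-\lambda^p_-)$) to the curvature term $\tfrac{1}{3}R_{acbd}x^cx^d\xi_a\xi_b$, which gives the stated ${\mathfrak a}_2$. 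For ${\mathfrak a}_1$ one gathers the degree-one pieces of $\sigma_1\sigma_0+\sigma_0\sigma_1$, namely $-(\gamma^p\sigma_0+\sigma_0\gamma^p)\xi_p$, together with the degree-one part of $-i\sum_j\partial_{\xi_j}\sigma(D)\,\partial_{x_j}\sigma(D)$, which to the required order equals $i\,\gamma^j\partial_{x_j}\sigma_1'$; substituting $\gamma=-i(\lambda_+-\lambda_-)$ throughout, moving every $\lambda_+$ to the left of every $\lambda_-$ with the anticommutation relations, and contracting the Riemann indices with the help of the pair symmetries and the first Bianchi identity, one finds that the curvature contractions collapse either to $Ric_{ab}=\sum_c R_{acbc}$ or to the displayed $\lambda^p_+\lambda^r_-$ combination. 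Finally ${\mathfrak a}_0$ is needed only at ${\bf x}=0$, where $\sigma(D)^2$ contributes nothing (its degree-zero part is $O({\bf x}^2)$ there) and the whole value comes from the subleading term
\[
-i\sum_j\partial_{\xi_j}\sigma(D)\big|_{x=0}\,\partial_{x_j}\sigma_0\big|_{x=0}=-\tfrac{1}{3}(\lambda^j_+-\lambda^j_-)\lambda^p_-\lambda^r_+\lambda^s_-(R_{srpj}+R_{sprj});
\]
normal-ordering this product into the form $\lambda^p_+\lambda^r_+\lambda^s_-\lambda^t_-$, the Kronecker-delta remainders give (again via Bianchi) the $\tfrac{2}{3}\lambda^p_+\lambda^r_-Ric_{pr}$ term, while the surviving quartic monomial, after antisymmetrising the Riemann coefficients against $\lambda^p_+\lambda^r_+$ and $\lambda^s_-\lambda^t_-$, produces $\tfrac{1}{3}\lambda^p_+\lambda^r_+\lambda^s_-\lambda^t_-(R_{tsrp}+R_{trsp})$.

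The routine but delicate part, which I expect to be the main obstacle, is precisely these last reorganisations: the products of $\lambda_\pm$ coming out of the symbol calculus carry an unbalanced number of raising and lowering factors, so one has to commute them repeatedly into normal order and, at each step, keep track of the Kronecker deltas and recombine the resulting curvature contractions --- using the full antisymmetry in each pair of indices, the pair-exchange symmetry of $R_{abcd}$ and the first Bianchi identity --- without sign slips. Once the $\lambda_\pm$ algebra is handled systematically, the three displayed formulas follow.
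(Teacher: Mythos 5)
Your proposal is correct and follows essentially the same route as the paper's proof: both expand $\sigma(D^2)=\sigma(D)^2-i\sum_j\partial_{\xi_j}\sigma(D)\,\partial_{x_j}\sigma(D)$ (higher terms vanishing since $\sigma(D)$ is affine in $\xi$), identify the same contributing pieces for each homogeneity degree, and reduce them by normal-ordering the $\lambda_\pm$ with the anticommutation relations together with the Riemann symmetries and the first Bianchi identity. The only part left implicit --- which you correctly flag as the delicate step --- is showing that the unbalanced monomials such as $\lambda_-^j\lambda_-^q\lambda_+^r\lambda_-^s$ contracted with the curvature coefficients drop out (via $\delta_{rs}R_{srqp}=0$ and the vanishing of the totally antisymmetrised Riemann contraction), which is exactly what the paper's computation does.
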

	\begin{proof}
		The computation of the principal symbol ${\mathfrak a}_2$ is obvious for the symbol of order 1:
		$$ 
  	\begin{aligned}
			{\mathfrak a}_1 =& -\frac{1}{3} i  \{\lambda_+^t , \lambda_-^p \lambda_+^r \lambda_-^s  \}
			(R_{srpa}+R_{spra}) x^a \xi_t  
			+ \frac{1}{3} i  \{\lambda_-^t , \lambda_-^p \lambda_+^r \lambda_-^s \}
			(R_{srpa}+R_{spra}) x^a \xi_t  \\ &
			\quad + \frac{1}{3}  \gamma^p \lambda_-^r  (R_{aprb}+R_{abrp}) x^b   \xi_a  \\ &
			= - \frac{1}{3} i \lambda_+^r \lambda_-^s (R_{srta}+R_{stra})  x^a \xi_t  
			- \frac{1}{3} i \lambda_-^p \lambda_+^r  (R_{trpa}+R_{tpra})  x^a \xi_t  \\
			& \quad -  \frac{1}{3} i   \lambda_-^p \lambda_-^s  (R_{stpa}+R_{spta}) x^d \xi_t  
			- \frac{1}{3} i \lambda_+^p \lambda_-^s  (R_{tpsa}+R_{tasp}) x^a   \xi_t  \\
			&  \quad + \frac{1}{3} i \lambda_-^p \lambda_-^s  (R_{tpsa}+R_{tasp}) x^a   \xi_t  \\
			& =  \frac{2}{3} i Ric_{sa}  x^a \xi_s
			- \frac{1}{3} i\lambda_+^r \lambda_-^s 
			(R_{srta} + R_{stra} - R_{trsa} - R_{tsra} +R_{trsa} +R_{tasr})  x^a   \xi_t \\
			& \quad  - \frac{1}{3} i \lambda_-^r \lambda_-^s   
			(R_{stra}+R_{srta} - R_{trsa} -R_{tasr}  ) x^a \xi_t \\
			& =   \frac{2}{3} i Ric_{sa}  x^a \xi_s 
			- \frac{2}{3} i\lambda_+^r \lambda_-^s   (R_{srta} + R_{stra} )  x^a   \xi_t. 
		\end{aligned}
		$$
		and the order $0$ symbol:
		$$ 
		\begin{aligned}
			{\mathfrak a}_0 =
			& -\frac{1}{3} i  \gamma^p   \lambda_-^q \lambda_+^r \lambda_-^s   (R_{sqrp}+R_{srqp })   \\
			=&
			-\frac{1}{3}  \lambda^p_+   \lambda_-^q \lambda_+^r \lambda_-^s   (R_{sqrp}+R_{srqp})\\
			= &  \; \frac{2}{3}  \lambda_+^p   \lambda_-^s   Ric_{ps}
			+  \frac{1}{3}  \lambda_+^p   \lambda_+^r  \lambda_-^q \lambda_-^s   (R_{sqrp}+R_{srqp }) .
		\end{aligned}
		$$
	\end{proof}
\subsection{The inverse of $D^2$ and its powers}
In this section, we  present the results that can be applied to a more general situation than the Hodge-Dirac operator.  \bblu Note, that  since we work with pseudodifferential operators, we
denote by the inverses of elliptic operator the corresponding parametrix. For this reason
we can ignore the kernel of these operators. \eblu
Let us start with the following lemma:
	\begin{lem} \label{inv_laplace_type}
		Let $L$ be a Laplace-type operator with symbol 
		$$
		\sigma(L)= \mathfrak a_2 +\mathfrak a_1 +\mathfrak a_0
		$$
		expressed in normal coordinates as
		$$
		\begin{aligned}
			&\mathfrak a_2 = \bigl(\delta_{ab} +\frac13R_{acbd}x^cx^d\bigr) \xi_a\xi_b +o({{\bf x}^2}),\\
			&\mathfrak a_1 = i P_{ab}\xi_ax^b + o({{\bf x}^1}),\\
			&\mathfrak a_0 = Q + o({{\bf x}^0}).
		\end{aligned}
		$$
		Then the theree leading symbols of $\sigma(L^{-k})= \mathfrak c_{2k} +\mathfrak c_{2k+1} +\mathfrak c_{2k+2}$ are:
		$$
		\begin{aligned}
			& \mathfrak c_{2k}= ||\xi||^{-2k-2} \left( \delta_{ab} - \frac{k}{3} R_{acbd} x^c x^d \right)  \xi_a \xi_b + o({{\bf x}^2}),\\
			&\mathfrak c_{2k+1}= -ik ||\xi||^{-2k-2} P_{ab}\xi_ax^b + o({{\bf x}^1}),\\
			& \mathfrak c_{2k+2}= -k||\xi||^{-2k-2} Q +k(k+1) ||\xi||^{-2k-4}\left(P_{ab}-\frac13 Ric_{ab}\right) \xi_a \xi_b + o({{\bf x}^0}).
		\end{aligned}
		$$
	\end{lem}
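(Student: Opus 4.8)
The plan is to realise $L^{-k}$ via the holomorphic functional calculus and reduce the problem to the symbol of the resolvent $(\lambda-L)^{-1}$, whose recursion in normal coordinates is short and which produces the $k$-dependence automatically from a residue.

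Since $L$ is an invertible positive Laplace-type operator, $L^{-k}=\frac{1}{2\pi i}\oint_\Gamma\lambda^{-k}(\lambda-L)^{-1}\,d\lambda$ for a positively oriented contour $\Gamma$ enclosing $\mathrm{spec}(L)\subset(0,\infty)$ but not the origin. Working in the $\lambda$-dependent symbol calculus (with $\lambda$ of weight $2$), expand $\sigma\!\big((\lambda-L)^{-1}\big)\sim\rho_{-2}+\rho_{-3}+\rho_{-4}+\cdots$, so that $\mathfrak c_{2k+j}=\frac{1}{2\pi i}\oint_\Gamma\lambda^{-k}\rho_{-2-j}\,d\lambda$; since the integral does not touch $x$, it is enough to compute $\rho_{-2}$ modulo $o({\bf x^2})$, $\rho_{-3}$ modulo $o({\bf x})$ and $\rho_{-4}$ at $x=0$. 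Solving the parametrix equation $\sigma(\lambda-L)\#\sigma\!\big((\lambda-L)^{-1}\big)=\mathrm{id}$ order by order, with $\sigma(A)\#\sigma(B)=\sum_\alpha\frac1{\alpha!}\partial_\xi^\alpha\sigma(A)\,(-i\partial_x)^\alpha\sigma(B)$, one gets immediately $\rho_{-2}=(\lambda-\mathfrak a_2)^{-1}$, hence
$$\rho_{-2}=(\lambda-||\xi||^2)^{-1}+\tfrac13(\lambda-||\xi||^2)^{-2}\,R_{acbd}\,x^cx^d\,\xi_a\xi_b+o({\bf x^2}).$$
For the next two components one uses that in normal coordinates $\mathfrak a_2=||\xi||^2+o({\bf x^2})$ and $\mathfrak a_1=iP_{ab}\xi_ax^b+o({\bf x})$ (so $\mathfrak a_1$ and its $\xi$-gradient vanish at $x=0$, while $\partial_{\xi_j}\mathfrak a_2=2\xi_j$ and $\partial_{\xi_j}\partial_{\xi_l}\mathfrak a_2=2\delta_{jl}$ there), together with the fact that any contraction of a curvature factor $R_{ajbd}$ with the pair $\xi_a\xi_j$ vanishes by antisymmetry; the single trace that remains is $\sum_jR_{ajbj}=Ric_{ab}$, produced by the $\partial_{\xi_j}\partial_{\xi_l}\mathfrak a_2$ term. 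The recursion then delivers
$$\rho_{-3}=i\,(\lambda-||\xi||^2)^{-2}\,P_{ab}\,\xi_a x^b+o({\bf x}),\qquad \rho_{-4}\big|_{x=0}=Q\,(\lambda-||\xi||^2)^{-2}+2\,(\lambda-||\xi||^2)^{-3}\big(P_{ab}-\tfrac13Ric_{ab}\big)\xi_a\xi_b.$$
Finally, by the residue theorem, for $m\ge1$,
$$\frac{1}{2\pi i}\oint_\Gamma\frac{\lambda^{-k}}{(\lambda-||\xi||^2)^{m}}\,d\lambda=\frac{1}{(m-1)!}\,\partial_\lambda^{\,m-1}\lambda^{-k}\Big|_{\lambda=||\xi||^2}=\begin{cases}||\xi||^{-2k}&m=1,\\[1mm]-k\,||\xi||^{-2k-2}&m=2,\\[1mm]\tfrac12k(k+1)\,||\xi||^{-2k-4}&m=3.\end{cases}$$
Substituting $\rho_{-2},\rho_{-3},\rho_{-4}$ and rewriting $||\xi||^{-2k}=||\xi||^{-2k-2}\delta_{ab}\xi_a\xi_b$ in the first case reproduces exactly the three asserted formulas.

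The point demanding care is the order-$(-2)$ step for $\rho_{-4}$: one must check that, among the lower-order terms furnished by the composition formula, only three contribute at $x=0$ — the two containing $\mathfrak a_1$ are $o({\bf x^2})$; the term pairing $\partial_{\xi_j}\mathfrak a_2$ with $\partial_{x^j}\rho_{-3}$ supplies the $P_{ab}\xi_a\xi_b$ part; and the second $\xi$-derivative term, once the antisymmetric $R$-with-$\xi$ contractions drop out, supplies only the Ricci trace — and that these combine into $P_{ab}-\tfrac13Ric_{ab}$ with relative weight $2$, so that the triple-pole residue $\tfrac12k(k+1)$ yields the coefficient $k(k+1)$ of the statement. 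Everything else — keeping track of joint $(\xi,\lambda)$-degrees and of the error orders $o({\bf x^2}),o({\bf x}),o({\bf 1})$ through both the recursion and the contour integral — is routine.
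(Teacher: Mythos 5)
Your proof is correct, and it takes a genuinely different route from the paper's. The paper first solves $\sigma(LL^{-1})=1$ order by order to get the three leading symbols $\mathfrak b_2,\mathfrak b_3,\mathfrak b_4$ of $L^{-1}$ (exactly the $k=1$ case of the claim), and then invokes an external result (Lemma A1 of \cite{DSZ23}) to propagate these to the symbols of $L^{-k}$; the entire $k$-dependence is thus outsourced to that lemma. You instead run the Seeley-type argument: compute the resolvent parametrix $\rho_{-2},\rho_{-3},\rho_{-4}$ of $(\lambda-L)^{-1}$ and extract $\mathfrak c_{2k+j}$ from the contour integral $\frac{1}{2\pi i}\oint\lambda^{-k}\rho_{-2-j}\,d\lambda$, so that the coefficients $1,-k,\tfrac12 k(k+1)$ arise as residues at the pole $\lambda=\|\xi\|^2$ of orders $1,2,3$. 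The symbol-level work is essentially identical in the two approaches (your recursion for $\rho_{-2-j}$ mirrors the paper's for $\mathfrak b_{2+j}$, including the observation that the $\partial_\xi\mathfrak a_2\cdot\partial_x\rho_{-2}$ term dies by antisymmetry of the Riemann tensor and that only the Ricci trace survives from the second $\xi$-derivative term); what your route buys is self-containedness, since the passage from $L^{-1}$ to $L^{-k}$ is replaced by an elementary residue computation rather than a citation. I verified your three residue values and the resulting coefficients against the stated formulas (including the normalization $\|\xi\|^{-2k}=\|\xi\|^{-2k-2}\delta_{ab}\xi_a\xi_b$ and the relative weight $2$ versus $-\tfrac13$ in front of $P_{ab}$ and $\mathrm{Ric}_{ab}$); they all check out. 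The only caveat worth a sentence in a final write-up is the standard one that $L$ may have a finite-dimensional kernel (e.g.\ harmonic forms for $L=D^2$), so $L^{-k}$ should be understood on the orthogonal complement; this modifies the operator only by a smoothing term and does not affect the symbols.
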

	\begin{proof}
\bblu First, observe that we follow the notation of most papers (see \cite{CoTr11})	 
and to simplify the notation indicate a negative order $-k$ by $k\geq 0$, so $\mathfrak b_2$
below denotes the symbol of order $-2$. \eblu 
We start with computing leading symbols of the inverse of $L$, i.e. $\sigma(L^{-1})= \mathfrak b_2 +\mathfrak b_3 +\mathfrak b_4$ using the fact, that $\sigma(LL^{-1})=\sigma(1)=1$. We have:
$$
		\begin{aligned}
			\mathfrak b_2 = &\; (\mathfrak a_2)^{-1} = ||\xi||^{-4} \left(\delta_{ab} - \frac13R_{acbd}x^cx^d\right) \xi_a\xi_b +o({{\bf x}^2}),\\
			\mathfrak b_3 =& \;\mathfrak b_2(-\mathfrak a_1 \mathfrak b_2 +i \partial_\xi^a \mathfrak a_2 \partial_a \mathfrak b_2)= -i ||\xi||^{-4} P_{ab}\xi_ax^b + o({{\bf x}^1}),\\
			\mathfrak b_4 =& \;\mathfrak b_2\left(-\mathfrak a_0 \mathfrak b_2 - \mathfrak a_1 \mathfrak b_3 + i \partial_\xi^a \mathfrak a_2 \partial_a \mathfrak b_3+ i \partial_\xi^a \mathfrak a_1 \partial_a \mathfrak b_2 + \frac{1}{2} \partial_\xi^{ab} \mathfrak a_2 \partial_{ab} \mathfrak b_2\right)\\
			=& -||\xi||^{-4} Q +2 ||\xi||^{-6}\left(P_{ab}-\frac13 Ric_{ab}\right) \xi_a \xi_b + o({{\bf x}^0}).
		\end{aligned}
$$
	To finish we apply Lemma\,A1 in \cite{DSZ23} and compute the three leading symbols of the powers of the pseudodifferential operator $L^{-k}$.
\end{proof}
Using the above  lemma for $L=D^2$, with the Hodge-Dirac operator $D$ \eqref{HoDi} , we get the following result.
	\begin{prop}\label{prop23}
The leading symbols of  $D^{-2k}$ are, up to the appropriate order in $x$,
		\begin{equation} 
			\begin{aligned}
				&\mathfrak c_{2k} = ||\xi||^{-2k-2} \left( \delta_{ab} - \frac{k}{3} R_{acbc} x^c x^d \right)  \xi_a \xi_b + o({\bf x}^2),\\
				&\mathfrak c_{2k+1}=-\frac{2}{3} ki||\xi||^{-2k-2}
				\mathrm{Ric}_{ab} x^b \xi_a 
				{
					+} \frac{2}{3} k i ||\xi||^{-2k-2}  \lambda_+^r \lambda_-^s   \bigl(  R_{srba} + R_{sbra}  \bigr) x^a   \xi_b  + o({\bf x}^1)  \\
				&\mathfrak c_{2k+2}=\frac{k(k+1)}{3} ||\xi||^{-2k-4} \mathrm{Ric}_{ab}\xi_a\xi_b \\
				& \qquad \qquad  
				- \frac{2}{3} k (k+1) ||\xi||^{-2k-4}  \, 
				\lambda_+^r \lambda_-^s   (R_{srab} + R_{sarb} )  \xi_a   \xi_b   \\
				& \qquad \qquad 
				+\frac{1}{3} k ||\xi||^{-2k-2}\lambda_+^p  \lambda_-^q  \lambda_+^r \lambda_-^s   (R_{sqrp}+R_{srqp }) 
				+ o({{\bf x}^0}).  
			\end{aligned}\label{LapTF3}
		\end{equation}	
	\end{prop}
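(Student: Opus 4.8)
The plan is to apply Lemma~\ref{inv_laplace_type} with $L=D^2$, so that $L^{-k}=D^{-2k}$, and then to simplify the output using the symmetries of the Riemann tensor and the anticommutation relations for $\lambda_\pm$. First I would read off the tensors $P_{ab}$ and $Q$ required in Lemma~\ref{inv_laplace_type} from the expansion of the three homogeneous symbols $\mathfrak a_2,\mathfrak a_1,\mathfrak a_0$ of $D^2$: collecting the coefficient of $i\xi_a x^b$ in $\mathfrak a_1$ and the ${\bf x}$-independent part of $\mathfrak a_0$ gives
$$
P_{ab}=\tfrac23\,\mathrm{Ric}_{ab}-\tfrac23\,\lambda_+^p\lambda_-^r\bigl(R_{rpab}+R_{rapb}\bigr),
\qquad
Q=\tfrac23\,\lambda_+^p\lambda_-^r\,\mathrm{Ric}_{pr}+\tfrac13\,\lambda_+^p\lambda_+^r\lambda_-^s\lambda_-^t\bigl(R_{tsrp}+R_{trsp}\bigr).
$$
Inserting this $P_{ab}$ into the formulas for $\mathfrak c_{2k}$ and $\mathfrak c_{2k+1}$ of Lemma~\ref{inv_laplace_type} reproduces the first two lines of \eqref{LapTF3} verbatim, up to a relabelling of the dummy indices (the ${\bf x}^2$-term of $\mathfrak a_2$ is already in the normalised form demanded by the lemma).

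The substantive step is the symbol $\mathfrak c_{2k+2}$. Here I would first compute $P_{ab}-\tfrac13\,\mathrm{Ric}_{ab}=\tfrac13\,\mathrm{Ric}_{ab}-\tfrac23\,\lambda_+^p\lambda_-^r(R_{rpab}+R_{rapb})$, so that the term $k(k+1)||\xi||^{-2k-4}(P_{ab}-\tfrac13\,\mathrm{Ric}_{ab})\xi_a\xi_b$ of Lemma~\ref{inv_laplace_type} yields, after relabelling, exactly the first two terms of the claimed $\mathfrak c_{2k+2}$. It then remains to expand $-k||\xi||^{-2k-2}Q$. Its quadratic part contributes $-\tfrac23 k\,\lambda_+^p\lambda_-^r\,\mathrm{Ric}_{pr}$, while in the quartic part one commutes the middle pair using $\lambda_+^r\lambda_-^s=\delta_{rs}\,\mathrm{id}-\lambda_-^s\lambda_+^r$, splitting it into a $\delta_{rs}$-contraction and a genuine $\lambda_+\lambda_-\lambda_+\lambda_-$ term. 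The trace identities for the Riemann tensor give $\delta_{rs}(R_{tsrp}+R_{trsp})=-2\,\mathrm{Ric}_{tp}$, hence the $\delta_{rs}$-contraction equals $+\tfrac23 k\,\lambda_+^p\lambda_-^r\,\mathrm{Ric}_{pr}$, which cancels the quadratic contribution exactly; the surviving $\lambda_+\lambda_-\lambda_+\lambda_-$ term is precisely the last term of \eqref{LapTF3}. Collecting everything gives the stated expression, with all remainders staying at the orders $o({\bf x^2})$, $o({\bf x})$, $o({\bf 1})$ already controlled by Lemma~\ref{inv_laplace_type}.

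I expect the only point requiring care to be this bookkeeping in the quartic $\lambda$-term together with recognising the Ricci cancellation that eliminates both $\lambda_+\lambda_-\,\mathrm{Ric}$ contributions from $\mathfrak c_{2k+2}$; the remainder is a direct substitution into Lemma~\ref{inv_laplace_type} combined with the elementary antisymmetries and first Bianchi identity for $R_{abcd}$, and hence purely mechanical.
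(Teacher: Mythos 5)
Your proposal is correct and follows essentially the same route as the paper: read off $P_{ab}$ and $Q$ from the homogeneous symbols of $D^2$ and substitute them into Lemma~\ref{inv_laplace_type}. The only cosmetic difference is that the paper quotes $Q$ directly in the compact form $-\tfrac13\lambda_+^p\lambda_-^q\lambda_+^r\lambda_-^s(R_{sqrp}+R_{srqp})$ (the intermediate expression from the computation of $\mathfrak a_0$), whereas you start from the expanded form of $\mathfrak a_0$ and re-derive that recombination via the anticommutation relation and the contraction $\delta_{rs}(R_{tsrp}+R_{trsp})=-2\,\mathrm{Ric}_{tp}$, correctly exhibiting the cancellation of the $\lambda_+\lambda_-\,\mathrm{Ric}$ terms.
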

\begin{proof}
	For $L=D^2$ we substitute in Lemma \ref{inv_laplace_type}
		$$
		\begin{aligned}
			&P_{ab} = \frac{2}{3} Ric_{ab} 
			- \frac{2}{3} \lambda_+^r \lambda_-^s   (R_{srab} + R_{sarb} ),  \\
			&Q= -  \frac{1}{3}  \lambda_+^p  \lambda_-^q \lambda_+^r \lambda_-^s   (R_{sqrp}+R_{srqp }).
		\end{aligned}
		$$
	\end{proof}
	\section{Spectral functionals}
In \cite{DSZ23}, we defined two spectral functionals for finitely summable spectral triples, which for the canonical spectral triple over the spin manifold $M$ allow to recover the metric and the Einstein tensors, viewed as bilinear functionals over a pair of one-forms. We recall the definition:
	\begin{defn}[cf. \cite{DSZ23}, Definition 5.4]\label{Dwres}
		If $(\cA, D, \cH)$ is a $n$-summable spectral triple, let $\Omega^1_D$ be the $\cA$ bimodule of one forms generated by  $\cA$ and $[D, \cA]$. Moreover, assume there exists a generalised algebra of pseudodifferential operators which contains $\cA$, $D$,  $|D|^\ell$ for $\ell\in\IZ$ with
		a tracial state $\wres$ over this algebra (called a noncommutative residue), which identically 
		vanishes  on $T |D|^{-k}$ for any $k>n$ and a zero-order operator $T$ (an 
		operator in the algebra generated by $\cA$ and $\Omega^1(\cA)$). Then, 
\bblu 		
		for $u,w \in \Omega^1_D(\cA)$, we call 
		$$ \mathcal{g}_D(u,w) := \wres (u w|D|^{-n}), $$	
		{\em metric functional}, and
		$$ \mathcal{G}_D(u,w) := \wres (u \{ D, w \} D |D|^{-n}). $$	
		{\em Einstein functional}.
\eblu
	\end{defn}

\subsection{Hodge-de\,Rham spectral triple}

We compute now these functionals for 
$\cA=C^\infty(M)$ and $D=d+\delta$, identifying for 
$\dim M = n \geq 2$ ,		
 $$\Omega^1_D(A)\simeq\Omega^1(M)$$ 
\bblu
via the correspondence of local components
$$	 \; u=\gamma^pu_p \;\; \leftrightarrow \;\;  U=u_pe^p,
$$
with respect to an orthonormal  coframe $e^p$. \eblu First, let us compute  the metric functional ${\mathcal g}$. 
	
	\begin{prop}
		For $U, V\in \Omega^1(M)$, the metric spectral functional reads
			\begin{equation}
				{\mathcal g}(U,W) = 2^n v_{n-1}\int\limits_M  
g(U,W)\,vol_g.				
			\end{equation}
		\end{prop}	
\bblu		
\begin{proof}
It can be seen that it is adequate to expand $U, W$, and so $u, w$, up to $o({{\bf x}^0})$ in normal coordinates: 
$$  u =\gamma^p u_p + o({{\bf x}^0}), \qquad  w =\gamma^r w_r+ o({{\bf x}^0}).$$
We compute (locally) the density  of  ${\mathcal W}\left( UW |D|^{-n}  \right)$ as
$$
\int_{||\xi||=1} \hbox{Tr\ } (\gamma^p \gamma^r u_p w_r )  {\mathfrak c}_{n}(D)\,d^nx
			 = v_{n-1}   
\sqrt{g}\,  \hbox{Tr\ } (\gamma^p \gamma^r u_p w_r )\,d^nx  =  
2^n v_{n-1}
\sqrt{g} \,u_p w_p \, d^nx.
$$		
The factor $2^n$ comes from the trace of $1$ over the space of differential forms. 
\end{proof}
Next, we have,	
			\begin{prop}\label{HDEin}
For $U, V\in \Omega^1(M)$, the Einstein functional reads 
				$$
				\mathcal{G}(U,W)=\frac{2^n}{6}v_{n-1}\int\limits_M 
G(U,W) vol_g,
				$$    
where $G$ is the Einstein tensor for $M$.
\end{prop}
\eblu
Before we begin with the proof  let us demonstrate some useful lemmas. The first computes $\mathcal W(ED^{-2m+2})$ for two specific cases of endomorphism $E$. 
\begin{lem} \label{funkcjonaly}
\bblu If $E$ is locally given by 
				$$e^{(0)} +e^{(2)}_{pq}\gamma^p\gamma^q,$$
the density of the functional $	\mathcal W(ED^{-2m+2})$ reads locally
				$$	\frac{n-2}{24}2^nv_{n-1}
		\sqrt{g} R( -e^{(0)} -e^{(2)}_{pp} )\,d^nx.
				$$
On the other hand, if $\tilde E$ is locally given by 
$$\tilde e^{(0)} +\tilde e^{(2)}_{pq}\lambda_+^p\lambda_-^q,  $$
the local density of the functional $\mathcal W(\tilde E D^{-2m+2})$ reads locally
		$$-\frac{n-2}{24}2^nv_{n-1}
		\sqrt{g} R\tilde e^{(0)} \,d^nx. $$ \eblu			
			\end{lem}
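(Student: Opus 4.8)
The strategy is to apply the Wodzicki residue formula directly, reducing each of the two cases to a computation of the symbol-level integrand and then to an angular integration over the sphere. By definition of the localized Wodzicki residue, $\mathcal W(E D^{-2m+2}) = \int_M \sqrt{g}\,\bigl(\int_{\|\xi\|=1} \Tr\,(E\,\sigma_{-n}(D^{-2m+2}))\,\mathcal V_\xi\bigr) d^n x$. Since $E$ is a zero-order endomorphism, the relevant symbol is $E$ multiplied by the order $-n = -2m$ homogeneous symbol of $D^{-2m+2} = D^{-2(m-1)}$, which is $\mathfrak c_{2(m-1)+2} = \mathfrak c_{2m}$ with $k = m-1$, as given by Proposition \ref{prop23}. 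I would substitute $k = m-1$ there; note that the first line of $\mathfrak c_{2k}$ contributes at order $-2k = -2m+2 \ne -n$, so only the pieces $\mathfrak c_{2k+2}$ (with $k = m-1$) enter, namely the terms proportional to $\|\xi\|^{-2m-2}\mathrm{Ric}_{ab}\xi_a\xi_b$, $\|\xi\|^{-2m-2}\lambda_+^r\lambda_-^s(R_{srab}+R_{sarb})\xi_a\xi_b$, and $\|\xi\|^{-2m+2}\lambda_+^p\lambda_-^q\lambda_+^r\lambda_-^s(R_{sqrp}+R_{srqp})$. Restricting to $\|\xi\|=1$ kills the norm factors.

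\medskip

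\textbf{Key computational steps.} First, for each of the two forms of $E$, I would compute the fiberwise traces $\Tr(E\cdot(\text{each curvature term}))$. The crucial ingredients are: $\Tr(\gamma^p\gamma^q) = 2^n\delta_{pq}$ (trace of identity on forms is $2^n$), together with analogous trace identities for products $\gamma^p\gamma^q\lambda_+^r\lambda_-^s$ and for $\lambda_+^p\lambda_-^q$ against $\lambda_+^r\lambda_-^s$-type operators and against $\lambda_+^p\lambda_+^r\lambda_-^q\lambda_-^s$-type operators; these can be evaluated via the Clifford/exterior-algebra relations recalled in Section 2. Second, I would perform the angular integrations $\int_{\|\xi\|=1}\xi_a\xi_b\,\mathcal V_\xi = \frac{v_{n-1}}{n}\delta_{ab}$ (and $\int \mathcal V_\xi = v_{n-1}$ for the curvature-scalar term coming from the $\lambda_+^p\lambda_+^r\lambda_-^q\lambda_-^s$ piece). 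Third, I would assemble the results: all the Riemann-tensor contractions should collapse, using the first Bianchi identity and the symmetries $R_{acbd}=R_{bdac}$, $R_{acbd}=-R_{cabd}$, to multiples of the scalar curvature $R = g^{ab}\mathrm{Ric}_{ab}$. The factor $\frac{n-2}{24}$ should emerge from combining the coefficient $\frac{k(k+1)}{3}$ with $k=m-1$ (giving $\frac{(m-1)m}{3}$), the coefficient $\frac13 k = \frac{m-1}{3}$, and the $\frac1n = \frac{1}{2m}$ from the angular average — indeed $\frac{(m-1)m}{3}\cdot\frac{1}{2m} = \frac{m-1}{6} = \frac{n-2}{12}$ before the overall normalization, and tracking the extra $\frac12$ built into the statement's $\frac{1}{24}$ will come from how the Ricci and Riemann pieces combine.

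\medskip

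\textbf{Expected main obstacle.} The routine part is the angular integration; the delicate part is the fiberwise trace bookkeeping for the second case. For $\tilde E = \tilde e^{(0)} + \tilde e^{(2)}_{pq}\lambda_+^p\lambda_-^q$, one must show that the $\tilde e^{(2)}$ piece contributes \emph{nothing} and that only $\tilde e^{(0)}$ survives, pairing against the curvature terms to give exactly $-\frac{n-2}{24}2^n v_{n-1}\int_M\sqrt{g}\,R\,\tilde e^{(0)}$. This requires care because $\lambda_+^p\lambda_-^q$ is not trace-orthogonal to the curvature operators in the obvious way: one has $\Tr(\lambda_+^p\lambda_-^q) = 2^{n-1}\delta_{pq}$ and more intricate identities for $\Tr(\lambda_+^p\lambda_-^q\,\lambda_+^r\lambda_-^s)$ and $\Tr(\lambda_+^p\lambda_-^q\,\lambda_+^{r}\lambda_+^{s}\lambda_-^{t}\lambda_-^{u})$. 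I expect the vanishing of the $\tilde e^{(2)}$ contribution to hinge on antisymmetry of the Riemann tensor in pairs of indices being contracted against a symmetric combination after the $\xi$-integration, or on a Bianchi-type cancellation — locating the precise mechanism is where most of the work lies. I would also double-check the opposite sign relative to the first case, which should trace back to the sign in $\gamma^p = -i(\lambda_+^p - \lambda_-^p)$ feeding into $\Tr(\gamma^p\gamma^q)$ versus $\Tr(\lambda_+^p\lambda_-^q)$.
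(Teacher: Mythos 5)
Your plan is the paper's own proof: the paper literally says ``direct calculation using Proposition \ref{prop23}'', i.e.\ take $k=m-1$ in $\mathfrak c_{2k+2}$, note that only that piece has homogeneity $-n$, integrate over the sphere with $\int_{\|\xi\|=1}\xi_a\xi_b\,\mathcal V_\xi=\tfrac{v_{n-1}}{n}\delta_{ab}$ to reach the intermediate identity
$\mathcal W(ED^{-2m+2})=\tfrac{n-2}{12}v_{n-1}\int \Tr\bigl(E\bigl[2(R_{srqp}+R_{sqrp})\lambda_+^p\lambda_-^q\lambda_+^r\lambda_-^s+R-2\mathrm{Ric}_{qp}\lambda_+^p\lambda_-^q\bigr]\bigr)$,
and then evaluate the fiberwise traces with the identities of Lemma \ref{kawalki}. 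Your coefficient bookkeeping is right; the extra $\tfrac12$ turning $\tfrac{n-2}{12}$ into $\tfrac{n-2}{24}$ is simply that the net trace of the bracket is $-2^{n-1}R$ (and $-2^{n-1}\delta_{pq}R$ against $\gamma^p\gamma^q$) rather than $\pm2^nR$. Case 1 goes through exactly as you describe.

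The one point where your plan stalls is the one you flagged, and the difficulty is real: the $\tilde e^{(2)}$ piece does \emph{not} vanish, and no Bianchi-type mechanism will make it do so. Carrying out the trace with the six-$\lambda$ formula of Lemma \ref{Tra1} gives $\Tr\bigl(\lambda_+^a\lambda_-^b\bigl[2(R_{srqp}+R_{sqrp})\lambda_+^p\lambda_-^q\lambda_+^r\lambda_-^s+R-2\mathrm{Ric}_{qp}\lambda_+^p\lambda_-^q\bigr]\bigr)=-2^{n-2}R\,\delta_{ab}$: the $\mathrm{Ric}_{ab}$ contributions from the sextic and quartic traces cancel, but a scalar-curvature remnant survives, contributing $-\tfrac{n-2}{48}\,2^nv_{n-1}\int_M\sqrt{g}\,R\,\tilde e^{(2)}_{pp}$ to $\mathcal W(\tilde E D^{-2m+2})$. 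So the second displayed formula of the lemma is valid only when $\tilde e^{(2)}_{pp}=0$; this is consistent with the companion lemma for $\mathcal W(\tilde PD^{-2m})$, which does retain the analogous $-\tilde f^{(2)aa}_{pp}R$ term with exactly this relative factor $\tfrac12$. In executing your plan you should either add the $\tilde e^{(2)}_{pp}$ term to the conclusion or record tracelessness of $\tilde e^{(2)}$ as a hypothesis, rather than hunting for a cancellation that is not there.
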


The proof is based on direct calculations using Proposition \ref{prop23}:
\begin{equation}
		\begin{aligned}
	&\hbox{Tr\ } \left(E\int_{||\xi||=1} \mathfrak c_{2(m-1)+2}\right)
					{\mathcal V}_\xi\, d^nx=\\
					&\frac{n-2}{12}v_{n-1} \hbox{Tr\ }\Big(E\left(2(R_{srqp}+R_{sqrp})\lambda_+^p\lambda_-^q\lambda_+^r\lambda_-^s +R-2Ric_{qp}\lambda_+^p\lambda_-^q\right)\Big)d^nx
				\end{aligned}
			\end{equation}
Next, more generally, we state:
\begin{lem} \bblu
For $P$ such that $\sigma(P)=F^{ab}\xi_a\xi_b+G^a\xi_a+H$, where locally 
				$$ F^{ab}= f^{(0)ab} +f^{(2)ab}_{pq}\gamma^p\gamma^q, $$
$\mathcal w(PD^{-n})$ reads locally
			$$	 v_{n-1} 
		\left(	\hbox{Tr\ }H + \frac{2^n}{24} R(-f^{(0)aa}- f^{(2)aa}_{pp})\right) d^nx.$$
Also, for $\tilde P$ such that $\sigma(\tilde P)=\tilde F^{ab}\xi_a\xi_b+G^a\xi_a+H$, where locally 
				$$ \tilde F^{ab} = \tilde f^{(0)ab} +\tilde f^{(2)ab}_{pq}\lambda_+^p\lambda_-^q,$$
$\mathcal w(\tilde P D^{-n})$ reads locally
				$$
	v_{n-1}	\left(		\hbox{Tr\ }H + \frac{2^n}{48}
				\left( -2R \tilde f^{(0)aa} -\tilde f^{(2)aa}_{pp} R+ 2(\tilde f^{(2)ab}_{pq}+ \tilde f^{(2)ba}_{pq} )R_{paqb} \right)\right)d^nx. 
				$$ \eblu 
			\end{lem}
			The proof follows directly by computation using Proposition \ref{prop23} and 
			Lemma \ref{kawalki} applied to the explicit expression:
			\begin{equation}\label{secondfunctional}
				\begin{aligned}
			\int_{||\xi||=1}\sigma_{-2m}(PD^{-2m}){\mathcal V}_\xi\,
		&= \frac{1}{6}F^{aa} \left[(R_{srqp}+R_{bqrs}) \lambda_+^p\lambda_-^q \lambda_+^r\lambda_-^s 
					+\frac12 R - R_{pq}\lambda_+^p\lambda_-^q\right]\\
					&+\frac{1}{6}F^{ab}[-R_{ab}+(R_{qapb}+R_{paqb})\lambda_+^p\lambda_-^q]    
					+H.
				\end{aligned}
			\end{equation}
			\begin{proof}[Proof of Proposition \ref{HDEin}]
				We begin with computing the  symbol of  \bblu $ u D  w D $ at $x=0$, \eblu
				where it suffices to expand 				$u$ and $w$ as:
				$$ u =  \gamma^p {u}_p + o({{\bf x}^0}), \qquad  
				 w =  \gamma^s w_s + \gamma^s w_{sa}  x^a + o({{\bf x}^1}),$$
				and thus locally:
				$$ 
				\begin{aligned}
					u D wD  = & 
					\gamma^p \gamma^q \gamma^r \gamma^s {u}_p w_r \xi_q \xi_s 
					- i  \gamma^p \gamma^q \gamma^r \gamma^s {u}_p w_{rq}  \xi_s \\
					&  -\frac{1}{3} i \gamma^p \gamma^q \gamma^r 
					\lambda_-^s \lambda_+^t \lambda_-^z  (R_{ztsq} +R_{zstq})  u_p w_{r} +o({{\bf x}^0}).
				\end{aligned}
				$$
				Then, we use lemma \ref{funkcjonaly} for $P= uD wD$ and $E= uw$. In this case we have
				$$
				\begin{aligned}
					&E=u_p w_q\gamma^p\gamma^q, \\
					& H  = -\frac{1}{3} i \gamma^p \gamma^q \gamma^r 
					\lambda_-^s \lambda_+^t \lambda_-^z  (R_{ztsq} +R_{zstq})  u_p w_{r}\\
					& \quad=-\frac{2}{3}i\gamma^p 
					\lambda_-^q \lambda_+^r \lambda_-^s  (R_{srqt} +R_{sqrt})  u_p w_{t} 
					+ \frac{1}{3} i \gamma^p \gamma^q \gamma^r 
					\lambda_-^s \lambda_+^t \lambda_-^z  (R_{ztsr} +R_{zstr})  u_p w_{q}, \\ 
					&F^{pq}\xi_p\xi_q=  \gamma^r \gamma^p \gamma^s \gamma^q u_r w_s \xi_p\xi_q= (2u_r w_p \delta_{qs}\gamma^r\gamma^s 
					- u_r w_s \gamma^r\gamma^s \delta_{pq})\xi_p\xi_q,
				\end{aligned}
				$$
				where we used that $\gamma^p\gamma^q\xi_p\xi_q = \delta_{pq}\xi_p\xi_q$. 
				Next, we see,
				$$
				\begin{aligned} 
					&e^{(0)}=0, \qquad && e^{(2)}_{ab}=u_a w_b, \\
					& f^{(0)}=0, \qquad && f^{(2)ab}_{cd}=2u_c w_a\delta_{bd}
					-u_c w_d\delta_{ab}.
				\end{aligned}
				$$
				Finally, the contribution arising from $E$ gives:
				$$
				-\frac{n-2}{24}2^nv_{n-1}R\, u_aw_a.
				$$
				whereas the part from $F$ is,
				$$
				-\frac{2^n}{24} v_{n-1} R f^{(2)aa}_{ii}= 
				\frac{n-2}{24} 2^n v_{n-1} R\, u_a w_a.
				$$
			These two terms cancel each other, and we are left with terms that arise from  $H$. The only possible terms in $\hbox{Tr\ }H$ are linear combinations of $u_aw_aR$ and $u_aw_bRic_{ab}$, thus, we know, that the result is symmetric in $u_a,w_b$. It allows us to simplify the second term in $H$:
				$$
				H=-\frac{2}{3} i \gamma^p 
				\lambda_-^q \lambda_+^s \lambda_-^t  (R_{tsqb} +R_{tqsb})  u_p w_{b} + \frac{1}{3} i  \gamma^r 
				\lambda_-^q \lambda_+^s \lambda_-^t  (R_{tsqr} +R_{tqsr})  u_a w_{a} +\dots,
				$$
				where "$\dots$" are terms antisymmetric in $u_a,w_b$, which we can neglect. We can also insert $-i\lambda_+$ instead of the remaining $\gamma$'s because the part with $\lambda_-$ will be traceless. Now, using the lemma \ref{slady_naprzemienne} we get:
				$$
				\hbox{Tr\ }H= \frac{1}{6} \hbox{Ric}_{ab}\, u_a w_b  -\frac{1}{12} R \,u_a w_a= \frac{1}{6} G_{ab}\, u_a w_b .
				$$
				This proves the result.
			\end{proof}
\bblu
We deduce that for the spin$_c$ manifolds the spectral functionals for the Hodge-de\,Rham spectral triple are equal, up to the rank of the vector bundles, to those for the canonical spin$_c$ spectral triple.
\eblu			
			\subsection{Spectral closedness and torsion}
			In this section, we will prove that the Hodge-Dirac spectral triple has the property of being spectrally closed.
			\begin{thm}
				Let $T$ be an operator of order $0$ from the algebra generated by $a [D,b]$, $a,b \in C^\infty(M)$.
				Then,
				$$ \wres (T D |D|^n) = 0. $$
			\end{thm}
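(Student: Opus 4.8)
The plan is to show that the local Wodzicki residue density of $TD|D|^{-n}$ vanishes at every point of $M$, by evaluating it at the centre of geodesic normal coordinates; since this density is a well-defined $1$-density, its vanishing at each point gives $\wres(TD|D|^{-n})=0$.

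The first observation is that $T$ is a \emph{fibrewise} operator. Since $b\in C^\infty(M)$, the commutator $[D,b]$ is the zeroth-order operator of Clifford multiplication by $db$; from \eqref{HoDi} its symbol is $(\lambda_+^a-g^{ar}\lambda_-^r)\,\partial_a b$, homogeneous of degree $0$ and independent of $\xi$. Multiplying by $a\in C^\infty(M)$ and composing finitely many such operators only produces multiplication operators by smooth sections of $\mathrm{End}(\Lambda T^*M\otimes\IC)$. Hence for any $T$ in the algebra generated by the $a[D,b]$ the symbol is $\xi$-independent of order $0$, $\sigma(T)(x,\xi)=\mathcal T(x)$, so the symbol composition with $\sigma(D|D|^{-n})$ collapses to a pointwise product and in particular $\sigma_{-n}(TD|D|^{-n})=\mathcal T(x)\,\sigma_{-n}(D|D|^{-n})$. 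It therefore suffices to prove that $\sigma_{-n}(D|D|^{-n})(0,\xi)=0$ in normal coordinates centred at an arbitrary point.

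To compute this I would write $D|D|^{-n}=D\,(D^2)^{-m}$ with $n=2m$, use that $\sigma(D)=d_1+d_0$ exactly (as $D$ is a first-order differential operator), and feed in the three leading symbols $\mathfrak c_{2m},\mathfrak c_{2m+1},\mathfrak c_{2m+2}$ of $(D^2)^{-m}$ from Proposition \ref{prop23}. Because $d_1$ is linear in $\xi$ and $d_0$ is $\xi$-independent, a short degree count shows that only three terms of the composition formula contribute in degree $-n$, namely $d_1\mathfrak c_{2m+1}$, $-i(\partial_{\xi_a}d_1)(\partial_{x^a}\mathfrak c_{2m})$ and $d_0\mathfrak c_{2m}$. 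At $x=0$ each of these vanishes: $\mathfrak c_{2m+1}$ and $d_0$ are $O({\bf x})$ by Proposition \ref{prop23} and \eqref{HoDi}, while $\partial_{x^a}\mathfrak c_{2m}|_{x=0}=0$ because in normal coordinates $g^{ab}=\delta_{ab}+O({\bf x^2})$ and the curvature term in $\mathfrak c_{2m}$ is $O({\bf x^2})$. Hence $\sigma_{-n}(D|D|^{-n})(0,\xi)=0$, so the residue density $\int_{\|\xi\|=1}\tr\big(\mathcal T(0)\,\sigma_{-n}(D|D|^{-n})(0,\xi)\big)\,\mathcal V_\xi$ is zero at every point and $\wres(TD|D|^{-n})=0$.

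The only delicate point is the bookkeeping of which homogeneous components feed into the order-$(-n)$ symbol of the product; once that list is seen to be short and finite, each term is killed by the normal-coordinate expansions \eqref{allNorm}, \eqref{HoDi} and Proposition \ref{prop23}. As an independent check covering part of the statement: if $T$ is a product of an \emph{even} number of factors $a[D,b]$, then conjugation by the chirality operator $\Gamma$ (available since $n$ is even), which anticommutes with $D|D|^{-n}$ and fixes such a $T$, combined with traciality of $\wres$ gives $\wres(TD|D|^{-n})=-\wres(TD|D|^{-n})=0$; the direct computation above is what handles the odd case, and in fact treats all $T$ uniformly.
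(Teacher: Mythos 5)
Your argument is correct and is essentially the paper's own proof, just written out in more detail: both reduce to showing that $\sigma_{-n}(TD|D|^{-n})$ vanishes at the centre of normal coordinates, using that $T$ has a $\xi$-independent order-zero symbol together with the normal-coordinate expansions of $\sigma(D)$ and Proposition \ref{prop23}. Your explicit enumeration of the three contributing terms in the composition formula, and the chirality-operator check for an even number of factors, are welcome additions but do not change the route.
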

			\begin{proof}
				If we compute the symbol of  $TD$ at a chosen point on the manifold $M$ in normal ccordinates at
				$x=0$ we obtain,
				$$ \sigma(T D)  = T (- \gamma^p \xi_p). $$
				Next, if we combine it with Proposition \ref{prop23} we see that the symbol of order $-n$
				of $T D |D|^n$ is:
				$$ \sigma_{-n}(T D |D|^{-n}) = 0 + o(\mathbf{x^0}). $$
				This ends the proof.
			\end{proof}
		
	\bblu
		As a consequence, we demonstrate in \cite{DSZ23b} that the Hodge-Dirac spectral triple is torsion-free. It is interesting to study generalised Hodge-Dirac operators, which are defined through an arbitrary linear connection, are not metric compatible, and have a torsion. Also,
		the extension to noncommutative Hodge-de Rham calculi and a comparison with the approach of \cite{FGK16} would be of great interest. Note that we focused here on even-dimensional manifolds only for the sake of simplicity and the result  easily extends to the case of odd-dimensional manifolds (for technical details that include the computation of 
		symbols, compare \cite{DSZ23b}). 
\eblu			
			\appendix
			\section{Details of computations}
			\bblu
			We begin with a formula for the product of two pseudodifferential operators, $P$ and $Q$,
			which have expansion in homogeneous symbols,
			\begin{equation}
				\sigma (P)(x,\xi )=\sum \limits _{\alpha} \sigma (P)_{\alpha}(x, \xi),
				\hspace{20pt}
				\sigma (Q)(x,\xi )=\sum \limits _{\beta} \sigma (Q)_{\beta}(x, \xi) ,
				\label{eqA.1}
			\end{equation}
			respectively, where $\alpha , \beta $ are multiindices (such that $|\alpha| = \sum_k \alpha^k$ and $|\beta|$ are bounded from above). The symbol $\sigma (P)_{\alpha}(x, \xi)$ is
			homogeneous of order $|\alpha|$ if  for $r>0$ $\sigma (P)_{\alpha}(x, r\xi) = r^{|\alpha|} 
			\sigma (P)_{\alpha}(x, \xi)$.
			The composition
			rule for the symbols of their product takes the form \cite{Gi84}.
			%
			\begin{equation}
				\sigma (PQ)(x,\xi )=\sum \limits _{\beta}
				\frac{(-i)^{|\beta |}}{|\beta |!}\partial ^{\xi}_{\beta }\sigma (P)(x,
				\xi )\partial _{\beta }\sigma (Q)(x,\xi ),
				\label{composition}
			\end{equation}
			where $\partial _{a}^{\xi}$ denotes the partial derivative with respect
			to the coordinate of the cotangent bundle.
			\eblu

			\begin{lem}\label{Tra1}
				A direct computation of traces of products of $\lambda$ matrices is based on the following
				recursive formula: 
				$$
				\begin{aligned}
					\mathrm{Tr\ } &\lambda^{p_1}_+  \cdots  \lambda^{p_k}_+	\lambda^{q_1}_-  \cdots  \lambda^{q_k}_-  \\
					&=  \frac{1}{2}  \sum\limits_{j=1}^k (-1)^{k-j} \delta^{p_1 q_j}\; \mathrm{Tr\ } \,
					\bigl( \lambda^{p_2}_+  \cdots  \lambda^{p_k}_+	\lambda^{q_1}_-  \cdots    \lambda^{q_{j-1}}_-  \lambda^{q_{j+1}}_- \cdots \lambda^{q_k}_-  \bigr).
				\end{aligned}
				$$
				In particular, we have
				\begin{equation}
					\label{slady_naprzemienne}
					\begin{aligned}
						&\mathrm{Tr\ }( \lambda_+^p \lambda_-^q ) =
						2^{n-1} \delta^{pq}, 
						\\
						&\mathrm{Tr\ }( \lambda_+^{p_1} \lambda_-^{q_1}\lambda_+^{p_2} \lambda_-^{q_2} ) =
						2^{n-2}(\delta^{p_1q_1} \delta^{p_2q_2} + \delta^{p_1q_2} \delta^{p_2q_1}),
					\end{aligned}
				\end{equation}
				and
				\begin{equation}
					\begin{aligned}	
						\mathrm{Tr\ }( \lambda_+^{p_1} &\lambda_-^{q_1}\lambda_+^{p_2} \lambda_-^{q_2} \lambda_+^{p_3} \lambda_-^{q_3} ) = \\
						&=	2^n \bigl( \frac18 \sum_{\sigma\in S_3} \delta^{p_1q_{\sigma(1)}} \delta^{p_2q_{\sigma(2)}} \delta^{p_3q_{\sigma(3)}} 
						\!-\! \frac14 \delta^{p_1q_{2}} \delta^{p_2q_{3}} \delta^{p_3q_{1}} \bigr).	
					\end{aligned}	
				\end{equation}
			\end{lem}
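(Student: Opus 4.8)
The plan is to treat $\lambda_+^p$ and $\lambda_-^p$ as fermionic raising/lowering (creation/annihilation) operators acting on the $2^n$-dimensional space $\Lambda(\IR^n)\otimes\IC$ of forms at a point, and to exploit nothing beyond the CAR relations recorded in the Preliminaries together with the cyclicity of the trace. First I would isolate two elementary facts. One is the base value $\mathrm{Tr}(\mathrm{id})=2^n$, which is simply the dimension $\sum_\ell\binom{n}{\ell}=2^n$ of the space of forms (the same factor that produced the $2^n$ in the metric functional). The other is that any product containing unequal numbers of $\lambda_+$ and $\lambda_-$ factors changes the form-degree, hence is off-diagonal and has vanishing trace; this is why only balanced products, with equal numbers $k$ of each type, need be considered.

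For the recursive formula I would anticommute the leftmost factor $\lambda_+^{p_1}$ rightward through the normal-ordered product $\lambda_+^{p_1}\cdots\lambda_+^{p_k}\lambda_-^{q_1}\cdots\lambda_-^{q_k}$. Crossing the remaining $k-1$ raising operators costs a sign $(-1)^{k-1}$, since $\{\lambda_+^p,\lambda_+^r\}=0$. Then crossing $\lambda_-^{q_1},\dots,\lambda_-^{q_{j-1}}$ and contracting with $\lambda_-^{q_j}$ through $\{\lambda_+^{p_1},\lambda_-^{q_j}\}=\delta_{p_1q_j}$ yields a term with sign $(-1)^{k-1}(-1)^{j-1}=(-1)^{k-j}$ and the pair $\lambda_+^{p_1},\lambda_-^{q_j}$ deleted. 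The one remaining pass-through term, in which $\lambda_+^{p_1}$ crosses all $k$ lowering operators and lands at the far right, carries sign $(-1)^{k-1}(-1)^k=-1$, and by cyclicity it equals $-T$, where $T$ denotes the original trace. Collecting the contractions and transposing $-T$ gives $2T=\sum_{j=1}^k(-1)^{k-j}\delta_{p_1q_j}\mathrm{Tr}(\dots)$, which is exactly the stated recursion; the factor $\tfrac12$ is precisely this self-consistency.

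The explicit identities in \eqref{slady_naprzemienne} and the six-factor formula are written for the alternating orderings $\lambda_+^{p_1}\lambda_-^{q_1}\lambda_+^{p_2}\lambda_-^{q_2}\cdots$, so I would deduce them by first reordering each alternating product into the normal-ordered form to which the recursion applies, each transposition using $\lambda_-^{q_i}\lambda_+^{p_j}=\delta_{p_jq_i}-\lambda_+^{p_j}\lambda_-^{q_i}$ to split off a lower-order balanced trace plus a sign. For $k=1$ the product is already normal-ordered and the recursion with the base case gives $2^{n-1}\delta^{pq}$ at once; for $k=2$ a single reordering reduces matters to the two-pair recursion and produces $2^{n-2}(\delta^{p_1q_1}\delta^{p_2q_2}+\delta^{p_1q_2}\delta^{p_2q_1})$.

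The main obstacle is the six-factor ($k=3$) case: reordering $\lambda_+^{p_1}\lambda_-^{q_1}\lambda_+^{p_2}\lambda_-^{q_2}\lambda_+^{p_3}\lambda_-^{q_3}$ into normal order spawns a proliferation of contraction terms, and every sign must be tracked so that the total collapses to the fully symmetric sum $\tfrac18\sum_{\sigma\in S_3}\delta^{p_1q_{\sigma(1)}}\delta^{p_2q_{\sigma(2)}}\delta^{p_3q_{\sigma(3)}}$ together with the lone asymmetric correction $-\tfrac14\,\delta^{p_1q_2}\delta^{p_2q_3}\delta^{p_3q_1}$. The appearance of this single extra term, breaking the naive $S_3$ symmetry, is exactly the feature a vacuum-expectation Wick expansion would miss; it reflects that the trace runs over the entire Fock space rather than over a distinguished vacuum, and checking that it emerges with the stated coefficient is the delicate part of the bookkeeping.
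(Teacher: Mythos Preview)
Your approach is correct and complete in outline. The paper itself does not supply a proof of this lemma: it is stated in the appendix as a computational tool, with the recursive formula presented as the basis for ``direct computation'' and the particular cases listed without derivation. Your argument---anticommuting $\lambda_+^{p_1}$ through the normal-ordered string using the CAR relations from the Preliminaries, picking up the contraction terms with signs $(-1)^{k-j}$, and closing the loop via cyclicity so that the pass-through term returns $-T$ and forces the factor $\tfrac12$---is exactly the mechanism the recursion encodes, and your treatment of the base case $\mathrm{Tr}(\mathrm{id})=2^n$ and the degree-counting argument for unbalanced products are both sound. The reordering of the alternating products $\lambda_+^{p_1}\lambda_-^{q_1}\cdots$ into normal order before applying the recursion is the natural route to the explicit formulas, and your remark that the asymmetric $-\tfrac14\,\delta^{p_1q_2}\delta^{p_2q_3}\delta^{p_3q_1}$ term reflects tracing over the full exterior algebra rather than a vacuum expectation is a correct diagnosis of why naive Wick combinatorics fails here. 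In short, you are supplying precisely the argument the paper leaves implicit.
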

			Next, we present the results on traces of products of $\gamma$ and $\lambda$ matrices.
			\begin{lem}
				\begin{equation}\label{slady_gamma_lambda}
					\mathrm{Tr\ }(\gamma^p\gamma^q\lambda_+^r\lambda^s_-)=
					2^{n-2}\bigl(
					2\delta^{pq}\delta^{rs}
					+\delta^{ps}\delta^{qr}
					-\delta^{pr}\delta^{qs} \bigr)
					= 2^{n-1} \bigl(\delta^{pq}\delta^{rs}
					- \frac{1}{2} \varepsilon^{pq}_{rs} \bigr),
				\end{equation}
				\begin{equation}\label{corollary2}
					\mathrm{Tr\ }(\gamma^p\gamma^q\lambda_+^r
					\lambda_-^s\lambda_+^t\lambda_-^z) =
					2^{n-2} \delta^{pq}(\delta^{rs}\delta^{tz}+
					\delta^{rz}\delta^{st})
					-2^{n-3}(\delta^{rs}\varepsilon^{pq}_{tz}
					+\delta^{st}\varepsilon^{pq}_{rz}
					+\delta^{tz}\varepsilon^{pq}_{rs}
					+\delta^{rz}\varepsilon^{pq}_{st})
				\end{equation}	
			\end{lem}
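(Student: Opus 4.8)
The plan is to reduce both trace identities to the purely $\lambda_\pm$ traces already recorded in Lemma \ref{Tra1}. First I would eliminate the $\gamma$ matrices by substituting $\gamma^p=-i(\lambda_+^p-\lambda_-^p)$, so that
$$\gamma^p\gamma^q=-\bigl(\lambda_+^p\lambda_+^q-\lambda_+^p\lambda_-^q-\lambda_-^p\lambda_+^q+\lambda_-^p\lambda_-^q\bigr).$$
The decisive simplification is a grading argument: since $\lambda_+$ raises and $\lambda_-$ lowers the form degree, a product of these operators has vanishing trace over $\Omega(M)$ unless it contains equally many raising and lowering factors. Multiplying the expansion above by $\lambda_+^r\lambda_-^s$ (resp.\ by $\lambda_+^r\lambda_-^s\lambda_+^t\lambda_-^z$), only the two mixed $\gamma\gamma$-terms $\lambda_+^p\lambda_-^q$ and $\lambda_-^p\lambda_+^q$ survive, and with the overall sign they enter as $+(\lambda_+^p\lambda_-^q+\lambda_-^p\lambda_+^q)$.

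For \eqref{slady_gamma_lambda} this leaves the two four-fold traces $\mathrm{Tr}(\lambda_+^p\lambda_-^q\lambda_+^r\lambda_-^s)$ and $\mathrm{Tr}(\lambda_-^p\lambda_+^q\lambda_+^r\lambda_-^s)$. The first is read off directly from \eqref{slady_naprzemienne}. For the second I would normal-order using the anticommutation relation $\lambda_-^p\lambda_+^q=\delta_{pq}\,\mathrm{id}-\lambda_+^q\lambda_-^p$ to bring it to the all-plus-then-all-minus form covered by the recursion of Lemma \ref{Tra1}; adding the two results and collecting Kronecker deltas reproduces $2^{n-2}(2\delta^{pq}\delta^{rs}+\delta^{ps}\delta^{qr}-\delta^{pr}\delta^{qs})$. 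The equivalent $\varepsilon$-form then follows from the elementary identity $\varepsilon^{pq}_{rs}=\delta^{pr}\delta^{qs}-\delta^{ps}\delta^{qr}$ implied by the definition of $\epsilon$.

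The second identity \eqref{corollary2} is handled in the same spirit but with one more pair of indices. Here the surviving contribution is $\mathrm{Tr}\bigl[(\lambda_+^p\lambda_-^q+\lambda_-^p\lambda_+^q)\lambda_+^r\lambda_-^s\lambda_+^t\lambda_-^z\bigr]$: the first summand is precisely the alternating six-fold trace given explicitly in Lemma \ref{Tra1}, while the second must first be normal-ordered, successively applying $\lambda_-^a\lambda_+^b=\delta_{ab}\,\mathrm{id}-\lambda_+^b\lambda_-^a$ until all raising operators stand to the left, after which the four- and six-fold formulas of Lemma \ref{Tra1} apply. Summing the two contributions and rewriting the antisymmetric combinations via $\varepsilon^{pq}_{rs}=\delta^{pr}\delta^{qs}-\delta^{ps}\delta^{qr}$ yields the stated right-hand side. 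The main obstacle is purely combinatorial: the normal-ordering of the non-alternating six-fold product generates a proliferation of lower-order terms, and their careful cancellation and repackaging into the compact four-term $\varepsilon$-expression is the one place where bookkeeping must be done with care; no conceptual difficulty beyond the grading argument and the relations of Lemma \ref{Tra1} is involved.
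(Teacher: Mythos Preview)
Your proposal is correct and essentially coincides with the paper's own (omitted) computation: the paper likewise expands $\gamma^p\gamma^q$ in $\lambda_\pm$ via $\gamma^p\gamma^q= -\lambda_+^q\lambda_-^p + \lambda_+^p\lambda_-^q+\delta^{pq}+\dots$ and then appeals to Lemma~\ref{Tra1}. The only cosmetic difference is that the paper presents the already normal-ordered expression (with ``$\dots$'' standing for the degree-unbalanced, hence traceless, terms), whereas you first invoke the grading argument and normal-order afterwards; the content is the same.
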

			We skip the computational proof, which is based on expressing  $\gamma$-matrices in terms of $\lambda$-matrices, 
			\begin{equation*}
				\gamma^p\gamma^q= -\lambda_+^q\lambda_-^p + \lambda_+^p\lambda_-^q+\delta^{pq}+\dots,
			\end{equation*}
			and using the results of Lemma \ref{Tra1}.
			
As a consequence, we obtain the following identities for the geometric quantities:
			\begin{lem}\label{kawalki}
					In normal coordinates  around $x=0$ we have the following identities		
					\begin{equation}
						\begin{aligned}
							&	\mathrm{Tr\ }( \lambda_+^{p} \lambda_-^{q} ) Ric_{pq}=
							2^{n-1} R , \\
							& 	\mathrm{Tr\ }( \lambda_+^{p} \lambda_-^{q} )(R_{paqb}+R_{qapb})=
							2^n \mathrm{Ric}_{ab} ,\\
							& 	\mathrm{Tr\ }(\lambda_+^{p} \lambda_-^{q} \lambda_+^{r} \lambda_-^{s} )(R_{srqp}+R_{sqrp})=
							-2^{n-2} R ,\\
							& 	\mathrm{Tr\ }( \lambda_+^{p} \lambda_-^{q}\lambda_+^{r} \lambda_-^{s} )\mathrm{Ric}_{rs}=
							2^{n-2} (\delta_{pq}R + \mathrm{Ric}_{pq}) ,\\
							& 	\mathrm{Tr\ }( \gamma^{p} \gamma^{q}\lambda_+^{r} \lambda_-^{s} )
							\mathrm{Ric}_{rs}=
							2^{n-1} \delta_{pq}R ,\\
							& 	\mathrm{Tr\ }( \lambda_+^{p} \lambda_-^{q}\lambda_+^{r} \lambda_-^{s} )(R_{rasb}+R_{sarb})=
							2^{n-2} (2\delta_{pq}Ric_{ab} + R_{qapb} + R_{paqb}) ,\\
							& 	\mathrm{Tr\ }( \gamma^{p} \gamma^{q}\lambda_+^{r} \lambda_-^{s} )(R_{rasb}+R_{sarb})=
							2^{n} \delta_{pq}\mathrm{Ric}_{ab} ,\\
							& 	\mathrm{Tr\ }( \lambda_+^{p} \lambda_-^{q} \lambda_+^{r} \lambda_-^{s} \lambda_+^{t} \lambda_-^{z} )(R_{ztsr}+R_{zstr})=
							2^{n-3} (-R\delta_{pq}+2\mathrm{Ric}_{pq}) ,\\
							& 	\mathrm{Tr\ }( \gamma^{p} \gamma^{q}\lambda_+^{r} \lambda_-^{s} \lambda_+^{t} \lambda_-^{z} )(R_{ztsr}+R_{zstr})=
							-2^{n-2}\delta_{pq}R. 
						\end{aligned}
					\end{equation}		
				\end{lem}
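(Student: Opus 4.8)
The plan is to prove all nine identities by a single three-step reduction, so that after the pattern is fixed the lines differ only in bookkeeping. First I would replace each trace of a product of $\lambda$- and $\gamma$-matrices by its explicit value: the pure-$\lambda$ traces are supplied by Lemma~\ref{Tra1} (the formulas \eqref{slady_naprzemienne} and the six-index formula), and the mixed ones by \eqref{slady_gamma_lambda} and \eqref{corollary2}. Every such trace is a finite sum of products of Kronecker deltas, together, in the $\gamma$-cases, with terms carrying the antisymmetric symbol $\varepsilon^{pq}_{rs}$. This rewrites each left-hand side as a sum in which the curvature tensor is contracted against products of $\delta$'s.

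Second, I would carry out those contractions: each $\delta$ simply identifies a pair of summation indices, so after substitution every summand becomes a partially traced Riemann tensor such as $R_{rprp}$, $R_{pqqp}$, $R_{tptq}$ or $R_{pttq}$. Third, I would collapse each of these using only the algebraic symmetries of the curvature, namely antisymmetry within each index pair, the pair-interchange symmetry $R_{abcd}=R_{cdab}$ (and, where convenient, the first Bianchi identity), together with the definitions $\mathrm{Ric}_{ab}=R_{acbc}$ and $R=\mathrm{Ric}_{aa}$. In the sign convention fixed by \eqref{allNorm} this gives $\sum_{r,t}R_{rtrt}=R$ and $\sum_{r,t}R_{rttr}=-R$, which pins down every sign; each contraction then reduces to one of $0$, $\mathrm{Ric}_{pq}$, $R\,\delta_{pq}$, $\mathrm{Ric}_{ab}$ or $R$, and summing the surviving summands reproduces the stated right-hand side.

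A structural shortcut that I would record at the outset disposes of the three identities built on a $\gamma^p\gamma^q$ factor. In each of them the trace \eqref{slady_gamma_lambda} or \eqref{corollary2} splits into a $\delta^{pq}$-part and an $\varepsilon$-part. After one contracts the $\delta$ that accompanies each $\varepsilon$-term, the symbol $\varepsilon^{pq}_{\,\cdot\cdot}$, which is antisymmetric in its two lower indices, is paired against a partial trace of the curvature that is a Ricci tensor, hence symmetric in exactly those two indices; every $\varepsilon$-contribution therefore vanishes. Only the diagonal $\delta^{pq}$-part survives, which is why these three results are proportional to $\delta_{pq}$ and why their proof collapses to a single contraction.

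I expect the only genuine obstacle to be the two six-index identities, the last pure-$\lambda$ one and its $\gamma^p\gamma^q$ partner, where the six-index formula of Lemma~\ref{Tra1} expands into seven delta-monomials (the six permutations of the $S_3$ sum plus the correction term). Each monomial must be contracted against $R_{ztsr}+R_{zstr}$ and reduced, and tracking the index orderings and signs across the seven contributions is where care is required; no new idea is needed, however, since each resulting partial trace collapses to $\mathrm{Ric}_{pq}$ or $R\,\delta_{pq}$ exactly as in the four-index cases. Forming the weighted sum then yields $2^{n-3}(2\mathrm{Ric}_{pq}-R\,\delta_{pq})$ in the pure-$\lambda$ case and, after the $\varepsilon$-terms are discarded by the symmetry argument above, $-2^{n-2}R\,\delta_{pq}$ in the $\gamma$ case, completing the verification.
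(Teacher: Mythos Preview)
Your proposal is correct and follows exactly the paper's approach: the paper's own proof reads in full ``Direct computation using \eqref{slady_naprzemienne}--\eqref{slady_gamma_lambda} and the properties of the Riemann and Ricci tensors,'' which is precisely the three-step reduction you describe. Your observation that the $\varepsilon^{pq}_{\cdot\cdot}$-terms in the $\gamma^p\gamma^q$ identities die against the symmetric Ricci-type contractions is a useful structural remark that the paper leaves implicit.
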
   
				\begin{proof}
					Direct computation using  \eqref{slady_naprzemienne}-\eqref{slady_gamma_lambda} and the
					properties of the Riemann and Ricci tensors.
				\end{proof}
				
				\vspace{3mm}
				\section*{\bf Acknowledgements}
				LD acknowledges that this work is part of the project Graph Algebras partially supported by EU grant HORIZON-MSCA-SE-2021 Project 101086394 and
				was partially supported the University of Warsaw Thematic Research Programme "Quantum Symmetries".
				
				AS and PZ acknowledge that this work is supported by the Polish National Science Centre grant 2020/37/B/ST1/01540.

			\end{document}